\newcounter{figs}
\newtheorem{theorem}[equation]{Theorem}
\newtheorem{lemma}[equation]{Lemma}
\newtheorem{corollary}[equation]{Corollary}
\theoremstyle{remark}
\newtheorem{definition}[equation]{Definition}
\newtheorem{remark}[equation]{Remark}
\newtheorem{example}[equation]{Example}
\numberwithin{equation}{section}
\theoremstyle{definition}
\numberwithin{equation}{section}
\newcommand{\ci}{\perp\!\!\!\perp}
\newcommand{\ncom}{\newcommand}
\ncom{\beq}{\begin{equation}}
\ncom{\eeq}{\end{equation}}
\ncom{\bea}{\begin{eqnarray*}}
\ncom{\eea}{\end{eqnarray*}}
\ncom{\beqa}{\begin{eqnarray}}
\ncom{\eeqa}{\end{eqnarray}}
\ncom{\nno}{\nonumber}
\ncom{\ds}{\displaystyle}
\ncom{\half}{\frac{1}{2}}
\ncom{\mbx}{\makebox{.25cm}}
\ncom{\hs}{\mbox{\hspace{.25cm}}}
\ncom{\rar}{\rightarrow}
\ncom{\Rar}{\Rightarrow}
\ncom{\noin}{\noindent}
\ncom{\bc}{\begin{center}}
\ncom{\ec}{\end{center}}
\ncom{\sz}{\scriptsize}
\ncom{\rf}{\ref}
\ncom{\s}{\sqrt{2}}
\ncom{\sgm}{\sigma}
\ncom{\Sgm}{\Sigma}
\ncom{\psgm}{\sigma^{\prime}}
\ncom{\dt}{\delta}
\ncom{\Dt}{\Delta}
\ncom{\lmd}{\lambda}
\ncom{\Lmd}{\Lambda}
\ncom{\Th}{\Theta}
\ncom{\e}{\eta}
\ncom{\eps}{\epsilon}
\ncom{\pcc}{\stackrel{P}{>}}
\ncom{\lp}{\stackrel{L_{p}}{>}}
\ncom{\dist}{{\rm\,dist}}
\ncom{\sspan}{{\rm\,span}}
\ncom{\re}{{\rm Re\,}}
\ncom{\im}{{\rm Im\,}}
\ncom{\sgn}{{\rm sgn\,}}
\ncom{\ba}{\begin{array}}
\ncom{\ea}{\end{array}}
\ncom{\hone}{\mbox{\hspace{1em}}}
\ncom{\htwo}{\mbox{\hspace{2em}}}
\ncom{\hthree}{\mbox{\hspace{3em}}}
\ncom{\hfour}{\mbox{\hspace{4em}}}
\ncom{\vone}{\vskip 2ex}
\ncom{\vtwo}{\vskip 4ex}
\ncom{\vonee}{\vskip 1.5ex}
\ncom{\vthree}{\vskip 6ex}
\ncom{\vfour}{\vspace*{8ex}}
\ncom{\norm}{\|\;\;\|}
\ncom{\integ}[4]{\int_{#1}^{#2}\,{#3}\,d{#4}}
\ncom{\vspan}[1]{{{\rm\,span}\{ #1 \}}}
\ncom{\dm}[1]{ {\displaystyle{#1} } }
\ncom{\ri}[1]{{#1} \index{#1}}
\newtheoremstyle
    {remarkstyle}
    {}
    {11pt}
    {}
    {}
    {\bfseries}
    {:}
    {     }
    {\thmname{#1} \thmnumber{#2} }
\theoremstyle{remarkstyle}
\def\eps{\varepsilon}
\begin{document}

\title{O\lowercase{n the Construction and the Cardinality of Finite $\sigma$-Fields}}

\author{P. Vellaisamy }
\address{P. Vellaisamy, Department of Mathematics,
Indian Institute of Technology Bombay, Powai, Mumbai 400076, INDIA.}
\email{pv@math.iitb.ac.in} 
\author[Sayan Ghosh]{S. Ghosh}
\address{Sayan Ghosh, Department of Mathematics,
Indian Institute of Technology Bombay, Powai, Mumbai 400076, INDIA.}
\email{sayang@math.iitb.ac.in}
\author[M. Sreehari]{M. Sreehari}
\address{M. Sreehari,
6-B, Vrundavan Park, New Sama Road, Vadodara-390024, INDIA.}
\email{msreehari03@yahoo.co.uk}

\thanks{The research of S. Ghosh was supported by UGC, Govt. of India grant F.2-2/98 (SA-I)}
\subjclass{Primary: 28A05; Secondary: 60A05 }
\keywords{Finite $\sigma$-fields, induced partition of sets, cardinality, independence of events.}
\begin{abstract}
\noindent In this note, we first discuss some properties of generated $\sigma$-fields and a simple approach
to the construction of finite $\sigma$-fields. It is shown that the $\sigma$-field generated by a finite class of  $\sigma$-distinct sets which are also atoms, is the same as the one generated by the  partition induced by them. The range of the cardinality of such a generated  $\sigma$-field  is explicitly obtained. Some typical examples and their complete forms are discussed. We discuss also a simple algorithm to find the exact cardinality of some  particular finite $\sigma$-fields.  Finally, an application of our results to statistics, with regard to independence of events, is pointed out.
\end{abstract}

\maketitle
\vspace*{-0.7cm}

\section{Introduction}
The role of $\sigma$-fields in probability and statistics is well known.  A $\sigma$-field generated by a class of subsets of a give space $\Omega$, also known as the generated $\sigma$-field, is defined to be the intersection of all $\sigma$-fields containing that class. However, a constructive approach to obtain the $\sigma$-field generated by finite number of subsets has not been well studied in the literature. Indeed, no easy constructive method is available. It is a challenging task to construct a $\sigma$-field generated by, say, four arbitrary subsets and to know its exact cardinality. Given a finite class of  $\sigma$-distinct sets (see
Definition 2.2) which are also atoms, we discuss a simple approach to obtain the generated $\sigma$-field based on  the  partition induced by the given class and then look at the $\sigma$-field generated by this partition. In this direction, we first obtain the cardinality of the  induced partition by a finite class of such sets. Using this result, we obtain the cardinality of the generated $\sigma$-field. When all members of the  induced partition are non-empty,  the cardinality of the generated $\sigma$-field is $2^{2^{n}}$, a known result in the literature (see Ash and Doleans-Dade (2000), p~457).  

Let $\mathbb{R}$ denote the set of real numbers. Then the cardinality of $\mathbb{R}$ (also called cardinality of the continuum) is given by $2^{\aleph_{0}},$ where $\aleph_{0}$ is the cardinality of $\mathbb{N},$ the set of natural numbers. One of the fascinating results in measure theory is that there is no $\sigma$-field whose cardinality is countably infinite. In other words, the cardinality of a $\sigma$-field can be either finite or equal to $2^{\aleph_{0}}$ (uncountable)  (Billingsley (1995), p.~34). Our focus is on the cardinality of a $\sigma$-field, generated by a finite class of $n$ sets, and to show that it assumes only particular values within a fixed range. 
To the best of our knowledge, this problem has been addressed only for some special cases (Ash and Doleans-Dade (2000), p.~11).  Several typical examples are discussed to bring out the nature of the finitely generated $\sigma$-fields. An algorithm to find the exact cardinality of some specific $\sigma$-fields of interest is also presented. Finally, we discuss an application to statistics with regard to the independence of some events and establish some new results in this direction.   
\vspace{-0.4cm}
\section{Range for Cardinality of a finite $\sigma$-field}

Let us begin with a simple example. Let $\Omega$ be the given space, and consider two distinct subsets $B$ and $C$ of $\Omega$ such that $B\cup C\neq \Omega$ and $B\cap C \neq \phi.$ We  henceforth suppress the intersection symbol
$\cap$, unless the context demands it. For instance, $A\cap B$ will be denoted by $AB$.  

It is well known that the $\sigma$-field generated by the class $\left\{B,C\right\},$ denoted by $\sigma(B, C)$, is obtained by the usual operations of complementations, unions and intersections, as 
\begin{eqnarray*}
\sigma(B, C) &=& \left\{\phi, \Omega, B, C, B^{c}, C^{c}, B\cup C, B\cup C^{c}, B^{c}\cup C, B^{c}\cup C^{c}, B C, \right. \nonumber \\
& & \left. B C^{c}, B^{c} C, B^{c} C^{c}, (B C^{c})\cup (B^{c} C), (B C) \cup(B^{c} C^{c})\right\}. \nonumber
\end{eqnarray*}
Let $\mathcal{P}_{B,C} = \left\{B C, B^{c} C, B C^{c}, B^{c} C^{c}\right\}$ denote the  partition (see Definition \ref{2.2}) of $\Omega$ induced by $B$ and $C.$ The elements of the $\sigma$-field generated by a partition consist of the empty set $\phi$, the sets in the partition and all possible (finite) unions of them. The resulting class is evidently closed under complementation. Hence, the $\sigma$-field generated by $\mathcal{P}_{B,C}$ is given by 
\begin{eqnarray}\label{eqn2.1}
\sigma(\mathcal{P}_{B,C}) &=& \sigma(B C, B^{c} C, B C^{c}, B^{c} C^{c}) \nonumber \\
&=& \left\{\phi, \Omega, B C, B^{c} C, B C^{c}, B^{c} C^{c}, (B C)\cup (B^{c} C), \right. \nonumber \\ 
& & \left. (B C)\cup (B C^{c}),(B C)\cup (B^{c} C^{c}), (B^{c} C)\cup (B C^{c}), \right. \nonumber \\
& & \left. (B^{c} C)\cup (B^{c} C^{c}), (B C^{c}) \cup (B^{c} C^{c}), \right. \nonumber \\ 
& & \left. (B C)\cup (B^{c} C)\cup (B C^{c}), (B C)\cup (B^{c} C)\cup (B^{c} C^{c}), \right. \nonumber \\
& & \left. (B C)\cup (B C^{c})\cup (B^{c} C^{c}), (B^{c} C)\cup (B C^{c})\cup (B^{c} C^{c})\right\} \\
&=& \left\{\phi, \Omega, B C, B^{c} C, B C^{c}, B^{c} C^{c}, C, B, (B C)\cup (B^{c} C^{c}),\right. \nonumber \\
& & \left.(B^{c} C)\cup (B C^{c}), B^{c}, C^{c}, B\cup C, B^{c}\cup C, C^{c}\cup B, B^{c}\cup C^{c} \right\}, \nonumber 
\end{eqnarray}
as some unions in (\ref{eqn2.1}) admit simple forms which can be obtained using De Morgan's laws.
It is interesting to note  that $\sigma(B,C) = \sigma(\mathcal{P}_{B,C}).$ Note that $|\sigma(B,C)| = 16.$ 

\noindent Consider next the case when $B C = \phi$. In this case, 
\begin{eqnarray*}
\mathcal{P}_{B, C} &=& \left\{ B C^{c}, B^{c} C, B^{c} C^{c}\right\} = \left\{B, C, B^{c} C^{c}\right\}; \nonumber \\
\sigma(\mathcal{P}_{B,C})
&=& \left\{\phi, \Omega, B, C, B^{c} C^{c}, B\cup C, B\cup (B^{c} C^{c}), C\cup (B^{c} C^{c}) \right\} \nonumber \\
&=& \left\{\phi, \Omega, B, C, B^{c} C^{c}, B\cup C, C^{c}, B^{c} \right\}. \nonumber \\
&=&\sigma(B, C)
\end{eqnarray*}
and $|\sigma(B,C)| = 8$ here. Thus, when $B C = \phi$ also, we have $\sigma(B,C) = \sigma(\mathcal{P}_{B,C}).$ \\

 Consider next the cardinality of $\mathcal{P}_{B,C}$, denoted by $|\mathcal{P}_{B,C}|,$ where $B$ and $C$ are arbitrary subsets of $\Omega.$ Two cases arise: (i) $B\cup C = \Omega$ and (ii) $B\cup C\neq \Omega.$ \\
Case (i): If $B C = \phi,$ then $\mathcal{P}_{B,C} = \left\{B,C\right\}$. If $B C\neq \phi,$ then $\mathcal{P}_{B,C} = \left\{B C,B^{c} C,B C^{c}\right\}$ or $\left\{B,B^{c} C\right\}$ (if $B\subset C$) or $\left\{C,B C^{c}\right\}$ (if $C\subset B$). Hence, in this case,  $|\mathcal{P}_{B,C}| \in \{2, 3\}.$ 

\noindent Case (ii): If $B C = \phi,$ then $\mathcal{P}_{B,C} = \left\{B,C,B^{c} C^{c}\right\}$. If $B C\neq \phi,$ then $\mathcal{P}_{B,C} = \left\{B C,B^{c} C,B C^{c},B^{c} C^{c}\right\}$ or $\left\{B,B^{c} C,B^{c} C^{c}\right\}$ (if $B\subset C$) or $\left\{C,B C^{c},B^{c} C^{c}\right\}$ (if $C\subset B$). Hence,  in this case, $|\mathcal{P}_{B,C}| \in \{3, 4\}.$

 \noindent Thus, from cases (i) and (ii), we have  $|\mathcal{P}_{B,C}|\in \left\{2,3,4\right\}$ for arbitrary subsets $B$ and $C$.\\

\noindent It is of interest to know  the cardinality of the  partition based on  $n$ arbitrary sets. To answer the question, we
first introduce the following formal definitions.

\begin{definition} \label{2.1}
We call a class $\mathcal{A} = \left\{A_{1},A_{2},\ldots,A_{n}\right\}$ of sets  $\sigma$-distinct if no set in $\mathcal{A}$ can be obtained from other sets by an operation of union or intersection or complementation. 
\end{definition}

\noindent
Let $A^{1}\equiv A$ and $A^{0}\equiv A^{c}$ henceforth. A formal definition of the partition induced by $n$ sets is the following.

\begin{definition} \label{2.2}
Let $\mathcal{A} = \left\{A_{1},A_{2},\ldots,A_{n}\right\}$ be a $\sigma$-distinct class of subsets of $\Omega.$ Then the finest partition induced by $\mathcal{A},$ denoted by $\mathcal{P}_{A_{1}, A_{2},\ldots,A_{n}},$ is the collection of sets of the form
\begin{equation} \label{eqn2.2}
\mathcal{P}_{A_{1}, A_{2},\ldots,A_{n}} = \left\{\displaystyle\bigcap_{i=1}^{n} A_{i}^{\epsilon_{i}}~\middle|~\epsilon_{i}\in \left\{0,1\right\}, 0\leq\displaystyle\sum_{i=1}^{n}\epsilon_{i}\leq n\right\} = \mathcal{P}_{\mathcal{A}},
\end{equation}
where the empty sets are excluded.
\end{definition}

\begin{remark} \label{rem2.1}
Note that $\mathcal{P}_{A_{1}, A_{2}, A_{1}\cup A_{2}} = \mathcal{P}_{A_{1}, A_{2}, A_{1} A_{2}} = \mathcal{P}_{A_{1}, A_{2}, A_{1}^{c}} = \mathcal{P}_{A_{1}, A_{2}}.$ That is, $\mathcal{P}_{A_{1}, A_{2}, B} = \mathcal{P}_{A_{1}, A_{2}}$ for $B\in \sigma(A_{1},A_{2}).$ Hence, we consider, without loss of generality, only the $\sigma$-distinct sets $A_{1}$ and $A_{2}.$ 
\end{remark}

The following definition is well known.

\begin{definition} Let $\mathcal{A}$ be a collection of sets.
A set $A\in\mathcal{A}$ is called an atom of $\mathcal{A}$ if $B\subseteq A,$ and $B\in\mathcal{A},$ imply  $B = A;$ that is, no proper subset of $A$ belongs to $\mathcal{A}.$ 
\end{definition}

\noindent We start with a simple fact.

\begin{lemma} \label{lem2.1}
If $\{A_{1},A_{2},\ldots,A_{n}\}$ themselves form a partition of $\Omega,$ then $\mathcal{P}_{A_{1}, A_{2},\ldots,A_{n}} = \\ \left\{A_{1},A_{2},\ldots,A_{n}\right\}$ and $|\mathcal{P}_{A_{1}, A_{2},\ldots,A_{n}}| = n.$
\end{lemma}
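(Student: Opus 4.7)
The plan is to apply Definition 2.2 directly and classify the intersections $\bigcap_{i=1}^{n} A_{i}^{\epsilon_{i}}$ according to how many of the exponents $\epsilon_{i}$ equal $1$. Since $\{A_{1},\ldots,A_{n}\}$ forms a partition of $\Omega$, we have the two defining properties $A_{i}\cap A_{j} = \phi$ for $i\neq j$ and $\bigcup_{i=1}^{n} A_{i} = \Omega$; these are exactly the facts I will exploit in each case.

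First I would handle the all-zero pattern: if $\epsilon_{i}=0$ for every $i$, then $\bigcap_{i=1}^{n} A_{i}^{c} = \bigl(\bigcup_{i=1}^{n} A_{i}\bigr)^{c} = \Omega^{c} = \phi$ by De Morgan's law, so this tuple contributes nothing under the empty-set exclusion in Definition 2.2. Next I would consider patterns with at least two ones, say $\epsilon_{j}=\epsilon_{k}=1$ with $j\neq k$; then $\bigcap_{i=1}^{n} A_{i}^{\epsilon_{i}} \subseteq A_{j}\cap A_{k} = \phi$ by disjointness, so again this intersection is empty.

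The remaining case is when exactly one $\epsilon_{i}$ equals $1$, say $\epsilon_{k}=1$ and $\epsilon_{i}=0$ for $i\neq k$. Pairwise disjointness gives $A_{k}\subseteq A_{i}^{c}$ for each $i\neq k$, so $A_{k}\cap \bigcap_{i\neq k} A_{i}^{c} = A_{k}$. Since the parts of a partition are (tacitly) non-empty, this contributes exactly the set $A_{k}$ to $\mathcal{P}_{A_{1},\ldots,A_{n}}$, for each $k\in\{1,\ldots,n\}$. Combining the three cases, the only non-empty intersections produced by (\ref{eqn2.2}) are $A_{1},A_{2},\ldots,A_{n}$ themselves, which yields both $\mathcal{P}_{A_{1},A_{2},\ldots,A_{n}}=\{A_{1},A_{2},\ldots,A_{n}\}$ and $|\mathcal{P}_{A_{1},A_{2},\ldots,A_{n}}|=n$.

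There is really no substantive obstacle here; the statement is almost a tautology once Definition 2.2 is unpacked. The only mild point of care is the implicit assumption that each block $A_{i}$ of the partition is non-empty (so that it actually survives the empty-set exclusion), and the bookkeeping that all $2^{n}$ binary tuples other than the $n$ singletons collapse to $\phi$ under disjointness or to $\phi$ under the all-zero covering identity.
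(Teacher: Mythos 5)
Your proof is correct and follows essentially the same route as the paper: classify the intersections $\bigcap_{i}A_{i}^{\epsilon_{i}}$ by the number of exponents equal to $1$, kill the all-zero case by De Morgan and the covering property, kill the cases with two or more ones by pairwise disjointness, and identify each single-one case with $A_{k}$ itself. Your version is in fact slightly more explicit than the paper's about why the singleton patterns reduce to $A_{k}$ and about the tacit non-emptiness of the partition blocks.
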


\begin{proof} Since $\mathcal{A} = \left\{A_{1},A_{2},\ldots,A_{n}\right\}$ forms a partition of $\Omega,$
 $A_1^{c}\ldots A_n^{c} = \phi$ and all intersections of $A_j$'s order $2$ to $n$ are empty. This implies that all the sets in $\mathcal{P}_{A_{1}, A_{2},\ldots,A_{n}}$ with $2\leq\displaystyle\sum_{i=1}^{n}\epsilon_{i}\leq n$ are empty. Consider next the sets of the form $A_1^{\epsilon_1}\dots A_n^{\epsilon_n}$ with $\displaystyle\sum_{i=1}^{n}\epsilon_{i}=1$. Then, for example,
 $A_{1}A_{2}^{c}\ldots A_{n}^{c}= A_1 \bigcap (\displaystyle\bigcup_{i=2}^{n}A_{i})^{c} = A_1 \neq \phi, $
 since $\displaystyle\bigcup_{i=1}^{n}A_{i} = \Omega$.  Thus, $\mathcal{P}_{A_{1}, A_{2},\ldots,A_{n}} = \left\{A_{1},A_{2},\ldots,A_{n}\right\}$ and hence $|\mathcal{P}_{A_{1}, A_{2},\ldots,A_{n}}| = n.$ 
 \end{proof}

 \noindent The following example clearly shows the nature of the partition and its cardinality.

\begin{example}
Consider the class $\left\{A,B,C,D\right\}$ of 4 subsets of $\Omega.$ Then the  partition $\mathcal{P}_{A,B,C,D}$ induced by them is
\begin{eqnarray*}
\mathcal{P}_{A,B,C,D} &=& \left\{ABCD, A^{c}BCD, AB^{c}CD, ABC^{c}D, ABCD^{c}, A^{c}B^{c}CD, A^{c}BC^{c}D, A^{c}BCD^{c}, \right. \nonumber \\
& & \left. AB^{c}C^{c}D, AB^{c}CD^{c}, ABC^{c}D^{c}, A^{c}B^{c}C^{c}D, A^{c}B^{c}CD^{c}, A^{c}BC^{c}D^{c}, AB^{c}C^{c}D^{c}, \right. \nonumber \\ 
& & \left. A^{c}B^{c}C^{c}D^{c} \right\}. \nonumber
\end{eqnarray*}
If $A,B,C$ and $D$ themselves form a partition of $\Omega,$ then $\mathcal{P}_{A,B,C,D} = \left\{A,B,C,D\right\}$  and $|\mathcal{P}_{A,B,C,D}|$ = 4. 
If $A\cup B\cup C\cup D \neq \Omega,$ there may be two cases as follows. 
\begin{enumerate}
\item[(a)] If $ABCD\neq \phi$ and all other elements of $\mathcal{P}_{A,B,C,D}$ are also non-empty, then \\
$|\mathcal{P}_{A,B,C,D}| = 2^{4} = 16.$
\item[(b)] If $ABCD = \phi,$ there can be the following sub cases :
\begin{enumerate}
\item[(i)] If only $ABCD = \phi,$ then $|\mathcal{P}_{A,B,C,D}| = 15$ and 
\begin{eqnarray*}
\mathcal{P}_{A,B,C,D} &=& \left\{A^{c}BCD, AB^{c}CD, ABC^{c}D, ABCD^{c}, A^{c}B^{c}CD, A^{c}BC^{c}D, A^{c}BCD^{c}, \right. \nonumber \\
& & \left. AB^{c}C^{c}D, AB^{c}CD^{c}, ABC^{c}D^{c}, A^{c}B^{c}C^{c}D, A^{c}B^{c}CD^{c}, A^{c}BC^{c}D^{c}, AB^{c}C^{c}D^{c}, \right. \nonumber \\
& & \left. A^{c}B^{c}C^{c}D^{c} \right\}. \nonumber
\end{eqnarray*}
\item[(ii)] If only $BCD = \phi,$ along with the implied case (i) (not mentioned later), then $|\mathcal{P}_{A,B,C,D}| = 14$ and
\begin{eqnarray*}
\mathcal{P}_{A,B,C,D} &=& \left\{AB^{c}CD, ABC^{c}D, ABCD^{c}, A^{c}B^{c}CD, A^{c}BC^{c}D, A^{c}BCD^{c}, AB^{c}C^{c}D, \right. \nonumber \\
& & \left. AB^{c}CD^{c}, ABC^{c}D^{c}, A^{c}B^{c}C^{c}D, A^{c}B^{c}CD^{c}, A^{c}BC^{c}D^{c}, AB^{c}C^{c}D^{c}, A^{c}B^{c}C^{c}D^{c} \right\}. \nonumber
\end{eqnarray*}
\item[(iii)] If only $BCD = ACD = \phi,$ then $|\mathcal{P}_{A,B,C,D}| = 13$ and
\begin{eqnarray*}
\mathcal{P}_{A,B,C,D} &=& \left\{ABC^{c}D, ABCD^{c}, A^{c}B^{c}CD, A^{c}BC^{c}D, A^{c}BCD^{c}, AB^{c}C^{c}D, AB^{c}CD^{c}, \right. \nonumber \\
& & \left. ABC^{c}D^{c}, A^{c}B^{c}C^{c}D, A^{c}B^{c}CD^{c}, A^{c}BC^{c}D^{c}, AB^{c}C^{c}D^{c}, A^{c}B^{c}C^{c}D^{c} \right\}. \nonumber
\end{eqnarray*}
\item[(iv)] If only $BCD = ACD = ABD = \phi,$ then $|\mathcal{P}_{A,B,C,D}| = 12$ and
\begin{eqnarray*}
\mathcal{P}_{A,B,C,D} &=& \left\{ABCD^{c}, A^{c}B^{c}CD, A^{c}BC^{c}D, A^{c}BCD^{c}, AB^{c}C^{c}D, AB^{c}CD^{c}, ABC^{c}D^{c}, \right. \nonumber \\
& & \left. A^{c}B^{c}C^{c}D, A^{c}B^{c}CD^{c}, A^{c}BC^{c}D^{c}, AB^{c}C^{c}D^{c}, A^{c}B^{c}C^{c}D^{c} \right\}. \nonumber
\end{eqnarray*}
\item[(v)] If only $BCD = ACD = ABD = ABC = \phi,$ then $|\mathcal{P}_{A,B,C,D}| = 11$ and
\begin{eqnarray*}
\mathcal{P}_{A,B,C,D} &=& \left\{A^{c}B^{c}CD, A^{c}BC^{c}D, A^{c}BCD^{c}, AB^{c}C^{c}D, AB^{c}CD^{c}, ABC^{c}D^{c}, \right. \nonumber \\
& & \left. A^{c}B^{c}C^{c}D, A^{c}B^{c}CD^{c}, A^{c}BC^{c}D^{c}, AB^{c}C^{c}D^{c}, A^{c}B^{c}C^{c}D^{c} \right\}. \nonumber
\end{eqnarray*}
\item[(vi)] If only $ABD = ABC = CD = \phi,$ then $|\mathcal{P}_{A,B,C,D}| = 10$ and
\begin{eqnarray*}
\mathcal{P}_{A,B,C,D} &=& \left\{A^{c}BC^{c}D, A^{c}BCD^{c}, AB^{c}C^{c}D, AB^{c}CD^{c}, ABC^{c}D^{c}, A^{c}B^{c}C^{c}D, \right. \nonumber \\
& & \left. A^{c}B^{c}CD^{c}, A^{c}BC^{c}D^{c}, AB^{c}C^{c}D^{c}, A^{c}B^{c}C^{c}D^{c} \right\}. \nonumber
\end{eqnarray*}
\item[(vii)] If only $ABC = CD = BD = \phi,$ then $|\mathcal{P}_{A,B,C,D}| = 9$ and
\begin{eqnarray*}
\mathcal{P}_{A,B,C,D} &=& \left\{A^{c}BCD^{c}, AB^{c}C^{c}D, AB^{c}CD^{c}, ABC^{c}D^{c}, A^{c}B^{c}C^{c}D, A^{c}B^{c}CD^{c}, \right. \nonumber \\
& & \left. A^{c}BC^{c}D^{c}, AB^{c}C^{c}D^{c}, A^{c}B^{c}C^{c}D^{c} \right\}. \nonumber
\end{eqnarray*}
\item[(viii)] If only $CD = BD = BC = \phi,$ then $|\mathcal{P}_{A,B,C,D}| = 8$ and
\begin{eqnarray*}
\mathcal{P}_{A,B,C,D} &=& \left\{AB^{c}C^{c}D,  AB^{c}CD^{c}, ABC^{c}D^{c}, A^{c}B^{c}C^{c}D, A^{c}B^{c}CD^{c}, A^{c}BC^{c}D^{c}, \right. \nonumber \\
& & \left. AB^{c}C^{c}D^{c}, A^{c}B^{c}C^{c}D^{c} \right\}. \nonumber
\end{eqnarray*}
\item[(ix)] If only $CD = BD = BC = AD = \phi,$ then $|\mathcal{P}_{A,B,C,D}| = 7$ and
\begin{eqnarray*}
\mathcal{P}_{A,B,C,D} &=& \left\{AB^{c}CD^{c}, ABC^{c}D^{c}, A^{c}B^{c}C^{c}D, A^{c}B^{c}CD^{c}, A^{c}BC^{c}D^{c}, \right. \nonumber \\
& & \left. AB^{c}C^{c}D^{c}, A^{c}B^{c}C^{c}D^{c} \right\}. \nonumber
\end{eqnarray*}
\item[(x)] If only $CD = BD = BC = AD = AC = \phi,$ then $|\mathcal{P}_{A,B,C,D}| = 6$ and
\begin{eqnarray*}
\mathcal{P}_{A,B,C,D} &=& \left\{ABC^{c}D^{c}, A^{c}B^{c}C^{c}D, A^{c}B^{c}CD^{c}, A^{c}BC^{c}D^{c}, AB^{c}C^{c}D^{c}, A^{c}B^{c}C^{c}D^{c} \right\}. \nonumber
\end{eqnarray*}
\item[(xi)] If only $CD = BD = BC = AD = AC = AB = \phi,$ then $|\mathcal{P}_{A,B,C,D}| = 5$ and
\begin{eqnarray*}
\mathcal{P}_{A,B,C,D} &=& \left\{A^{c}B^{c}C^{c}D, A^{c}B^{c}CD^{c}, A^{c}BC^{c}D^{c}, AB^{c}C^{c}D^{c}, A^{c}B^{c}C^{c}D^{c} \right\}. \nonumber
\end{eqnarray*}
\end{enumerate}
\end{enumerate}
Hence, we observe that $|\mathcal{P}_{A,B,C,D}| \in \left\{4, 5, 6, \ldots, 16\right\}.$ 
\end{example}
   
\begin{remark} 
Two or more different partitions $\mathcal{P}_{A_{1}, A_{2},\ldots,A_{n}}$ may have the same cardinality. For example, if $\mathcal{A}= \{A, B, C, D \}$ such that $A\cup B\cup C\cup D \neq \Omega,$ then $|\mathcal{P}_{A,B,C,D}| = 9$ corresponds to the following distinct cases :
\begin{enumerate}
\item[(i)] If only $AB = CD = \phi$, along with the implied empty intersections, then  
\begin{eqnarray*}
\mathcal{P}_{A,B,C,D} &=& \left\{A^{c}BC^{c}D, A^{c}BCD^{c}, AB^{c}C^{c}D, AB^{c}CD^{c}, A^{c}B^{c}C^{c}D, A^{c}B^{c}CD^{c}, \right. \nonumber \\
& & \left. A^{c}BC^{c}D^{c}, AB^{c}C^{c}D^{c}, A^{c}B^{c}C^{c}D^{c} \right\}. \nonumber
\end{eqnarray*}
\item[(ii)] If only $AB = AC = AD = \phi$, along with the implied empty intersections, then
\begin{eqnarray*}
\mathcal{P}_{A,B,C,D} &=& \left\{A^{c}BCD, A^{c}B^{c}CD, A^{c}BC^{c}D,  A^{c}BCD^{c}, A^{c}B^{c}C^{c}D, \right. \nonumber \\
& & \left. A^{c}B^{c}CD^{c}, A^{c}BC^{c}D^{c}, AB^{c}C^{c}D^{c}, A^{c}B^{c}C^{c}D^{c}\right\}. \nonumber 
\end{eqnarray*}
\item[(iii)] If only $AB =  AC =  BCD = \phi$, along with the implied empty intersections, then 
\begin{eqnarray*}
\mathcal{P}_{A,B,C,D} &=& \left\{A^{c}BC^{c}D, A^{c}BCD^{c}, AB^{c}C^{c}D, A^{c}B^{c}CD, A^{c}B^{c}C^{c}D, \right. \nonumber \\
& & \left. A^{c}B^{c}CD^{c}, A^{c}BC^{c}D^{c}, AB^{c}C^{c}D^{c}, A^{c}B^{c}C^{c}D^{c} \right\}. \nonumber
\end{eqnarray*}
\end{enumerate}
In  the above cases,  the partitions are different,  though their cardinality is same.
\end{remark}

\noindent We next look at the cardinality of $\mathcal{P}_{A_{1}, A_{2},\ldots,A_{n}}$ induced by $\{A_{1},A_{2},\ldots,A_{n} \}$. It looks difficult to argue for the arbitrary sets. However, we have the following result for a fairly large and reasonable class of sets.

\begin{theorem}\label{thm2.1}
Let $\mathcal{A} = \left\{A_{1},A_{2},\ldots,A_{n}\right\}, n\geq 3,$ be a class of $\sigma$-distinct sets of $\Omega$, where each $A_{i}$ is an atom of $\mathcal{A},$ and $\mathcal{P}_{A_{1}, A_{2},\ldots,A_{n}}$ be the  partition induced by $A_{1},A_{2},\ldots,A_{n}.$ Then $|\mathcal{P}_{A_{1}, A_{2},\ldots,A_{n}}| \in \left\{n, n+1, \ldots, 2^{n}\right\}.$ 
\end{theorem}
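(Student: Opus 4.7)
The plan is to prove the two bounds separately. The upper bound $|\mathcal{P}_{A_1,A_2,\ldots,A_n}| \leq 2^n$ is immediate from Definition~\ref{2.2}: the elements of the partition are indexed by the binary strings $(\epsilon_1,\ldots,\epsilon_n) \in \{0,1\}^n$, of which there are exactly $2^n$; discarding empty intersections can only lower the count. So the real content is the lower bound $|\mathcal{P}_{A_1,A_2,\ldots,A_n}| \geq n$, which I would obtain by induction on $n$.

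For the base case, I would verify directly that $|\mathcal{P}_{A_1,A_2}| \geq 2$ under the hypotheses: since $A_1$ is an atom of $\{A_1,A_2\}$ we have $A_2 \not\subseteq A_1$, forcing $A_1^c A_2 \neq \phi$; symmetrically $A_1 A_2^c \neq \phi$, and these two sets are disjoint. For the inductive step, assume the bound for $n-1$ and consider $\mathcal{A} = \{A_1,\ldots,A_n\}$. The subfamily $\mathcal{A}' = \{A_1,\ldots,A_{n-1}\}$ inherits both hypotheses — being $\sigma$-distinct and having each member an atom are monotone properties that survive dropping a set — so by induction $|\mathcal{P}_{A_1,\ldots,A_{n-1}}| \geq n-1$. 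Moreover, $\mathcal{P}_{A_1,\ldots,A_n}$ is obtained from $\mathcal{P}_{A_1,\ldots,A_{n-1}}$ by intersecting each of its atoms with $A_n$ and with $A_n^c$ (then discarding empties), so certainly $|\mathcal{P}_{A_1,\ldots,A_n}| \geq |\mathcal{P}_{A_1,\ldots,A_{n-1}}|$.

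The main step — and the one I expect to require the most care — is upgrading this to a strict increase, yielding $|\mathcal{P}_{A_1,\ldots,A_n}| \geq |\mathcal{P}_{A_1,\ldots,A_{n-1}}| + 1 \geq n$. I would argue by contradiction: if no atom $P$ of $\mathcal{P}_{A_1,\ldots,A_{n-1}}$ splits into two non-empty pieces $P A_n$ and $P A_n^c$, then each such $P$ lies entirely inside $A_n$ or entirely outside it, so $A_n$ equals the union of those atoms contained in it. But every atom of $\mathcal{P}_{A_1,\ldots,A_{n-1}}$ is itself a Boolean combination of $A_1,\ldots,A_{n-1}$, whence $A_n \in \sigma(A_1,\ldots,A_{n-1})$. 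Reading the $\sigma$-distinct condition of Definition~\ref{2.1} in the strong sense that no $A_i$ belongs to the algebra generated by the remaining sets — the reading consistent with Remark~\ref{rem2.1}, where any $B\in\sigma(A_1,A_2)$ is treated as derivable from $\{A_1,A_2\}$ — this is the desired contradiction, so some atom must split and the induction closes.
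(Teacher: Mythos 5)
Your proof is correct, and it takes a genuinely different route from the paper's. The paper decomposes the partition into the layers $\mathcal{Q}_j$ of cells having exactly $j$ uncomplemented factors, obtains the upper bound from $\sum_{j}\binom{n}{j}=2^{n}$, and then proves the lower bound by a four-way case analysis on whether $\bigcup_{i}A_{i}=\Omega$ and whether $\bigcap_{i}A_{i}=\phi$; within each case it argues that if all of $\mathcal{Q}_{1},\ldots,\mathcal{Q}_{n-2}$ (resp.\ $\mathcal{Q}_{2},\ldots,\mathcal{Q}_{n-1}$) were empty, then every cell of $\mathcal{Q}_{n-1}$ (resp.\ $\mathcal{Q}_{1}$) must be non-empty, deriving a containment such as $A_{2}\subset A_{1}$ that contradicts the atom hypothesis. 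That argument is tied to particular, essentially monotone, configurations of empty layers, and in exchange it records the finer attainable ranges (e.g.\ $n+2,\ldots,2^{n}$ in Case (i)), which is more than the bare statement requires. Your induction on $n$ via the forgetful map $\mathcal{P}_{A_{1},\ldots,A_{n}}\to\mathcal{P}_{A_{1},\ldots,A_{n-1}}$ is uniform over all patterns of empty cells: surjectivity gives $|\mathcal{P}_{A_{1},\ldots,A_{n}}|\geq|\mathcal{P}_{A_{1},\ldots,A_{n-1}}|$, and if no cell split then $A_{n}$ would be a union of cells of the coarser partition, hence a Boolean combination of $A_{1},\ldots,A_{n-1}$, contradicting $\sigma$-distinctness; the strong reading of Definition \ref{2.1} you adopt is indeed the one the paper intends, as Remark \ref{rem2.1} confirms. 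Both hypotheses do pass to subfamilies as you claim, and your base case $n=2$ uses the atom hypothesis exactly where the paper does. In sum, you lose the case-by-case refinement of which values occur under which boundary conditions, but you gain a shorter and more watertight proof of the stated inequality $n\leq|\mathcal{P}_{A_{1},\ldots,A_{n}}|\leq 2^{n}$.
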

\begin{proof} Let
\begin{equation} \label{eqn2.3}
\mathcal{Q}_{j} = \left\{\displaystyle\bigcap_{i=1}^{n}A_{i}^{\epsilon_{i}}~\middle|~ \epsilon_{i} \in \left\{0,1\right\}, 1\leq i\leq n,  \displaystyle\sum_{i=1}^{n}\epsilon_{i} = j\right\},
\end{equation}
for $0\leq j\leq n.$ Then $\mathcal{P}_{A_{1}, A_{2},\ldots,A_{n}}$ can be represented as 
\begin{equation}\label{eqn2.4}
\mathcal{P}_{A_{1}, A_{2},\ldots,A_{n}} = \displaystyle\bigcup_{j=0}^{n}\mathcal{Q}_{j},
\end{equation}
where only non-empty elements of $\mathcal{Q}_{j}$'s are considered. This is because some $\mathcal{Q}_{j}$'s may contain empty sets. Note that $\mathcal{Q}_{j}$ consists of at most $\binom{n}{j}$ distinct non-empty sets corresponding to a selection of $j$ of the $n$ $\epsilon_{i}$'s as $1$ and the rest as $0.$ So $|\mathcal{Q}_{j}|\leq \binom{n}{j},$ and hence
\begin{equation}\label{eqn2.5}
|\mathcal{P}_{A_{1}, A_{2},\ldots,A_{n}}| = \left|\displaystyle\bigcup_{j=0}^{n}\mathcal{Q}_{j}\right| \leq\displaystyle\sum_{j=0}^{n}|\mathcal{Q}_{j}| \leq \displaystyle\sum_{j=0}^{n}\binom{n}{j} = 2^{n}.
\end{equation}
Note that the elements of $\mathcal{Q}_{j},$ for $1\leq j\leq n-1$ are all disjoint. Indeed, if $D_{1}\in \mathcal{Q}_{i}$ and $D_{2}\in\mathcal{Q}_{j}$, for some $j\neq i$, then $D_{1}D_{2} = \phi.$ Therefore, an element $E\in \mathcal{Q}_{j}$ being empty or non-empty does not affect the nature (emptiness or non-emptiness) of another element $D$ coming from $\mathcal{Q}_{i}$ or $\mathcal{Q}_{j}$ for all $i\neq j,$ in general. \\
Consider next the four exhaustive cases. 

\noindent Case (i): $\displaystyle\bigcup_{i=1}^{n}A_{i} \neq \Omega,$ $\displaystyle\bigcap_{i=1}^{n}A_{i} \neq \phi.$ In this case, our claim is
 \begin{equation}\label{eqn2.6}
|\mathcal{P}_{A_{1}, A_{2}, \ldots, A_{n}}|\in \left\{n+2, n+3, \ldots, 2^{n}\right\}.
\end{equation}

\noindent Here, both $\mathcal{Q}_{0}$ and $\mathcal{Q}_{n}$ contain non-empty sets, so that $|\mathcal{Q}_{0}| = |\mathcal{Q}_{n}| = 1.$ Suppose now $A_{i}$'s are such that all elements of $\mathcal{Q}_{1}$ to $\mathcal{Q}_{n-2}$ are empty. We show that none of the elements of $\mathcal{Q}_{n-1}$ is empty. Suppose an element of $\mathcal{Q}_{n-1},$ say, $A_{1}^{c}A_{2}\ldots A_{n} = \phi.$  Note first that $\displaystyle\bigcup_{0\leq\sum_{j=3}^{n}\epsilon_{j}\leq n-2} A_{3}^{\epsilon_{3}}\ldots A_{n}^{\epsilon_{n}} = \Omega$, since the LHS is the union of sets in $\mathcal{P}_{A_{3}\ldots A_{n}}$. Hence, 
\begin{equation}\label{eqn2.6n}
(A_{3}\ldots A_{n})^{c} = \displaystyle\bigcup_{0\leq\sum_{j=3}^{n}\epsilon_{j}\leq n-3} A_{3}^{\epsilon_{3}}\ldots A_{n}^{\epsilon_{n}}.
\end{equation} Now,
\begin{eqnarray*}
A_{1}^{c}A_{2} &=& A_{1}^{c}A_{2}A_{3}\ldots A_{n} + A_{1}^{c}A_{2}(A_{3}\ldots A_{n})^{c} \nonumber \\
&=& A_{1}^{c}A_{2}\bigcap\left(\displaystyle\bigcup_{0\leq\sum_{j=3}^{n}\epsilon_{j}\leq n-3} A_{3}^{\epsilon_{3}}\ldots A_{n}^{\epsilon_{n}}\right) ~~(\text{using }\eqref{eqn2.6n})\nonumber \\
&=& \displaystyle\bigcup_{0\leq\sum_{j=3}^{n}\epsilon_{j}\leq n-3} A_{1}^{c}A_{2}A_{3}^{\epsilon_{3}}\ldots A_{n}^{\epsilon_{n}}\ \nonumber \\
&=& \phi, \nonumber
\end{eqnarray*}
since each element $A_{1}^{c}A_{2}A_{3}^{\epsilon_{3}}\ldots A_{n}^{\epsilon_{n}},$ with $1\leq\sum_{j=3}^{n}\epsilon_{j}\leq n-3$ belongs to one of $\mathcal{Q}_{1}$ to $\mathcal{Q}_{n-2}.$ So $A_{2}\subset A_{1},$  a contradiction, since each $A_{i}$ is an atom of $\mathcal{A}$. Also, in this case, $|\mathcal{P}_{A_{1}, A_{2}, \ldots, A_{n}}| = 2 + \binom{n}{n-1} = n + 2.$ \\ 

Let now $2\leq k\leq n-1.$ Consider the case where  all elements of $\mathcal{Q}_{j}$ for $1\leq j\leq k-1$ are non-empty, only $r$ elements of $Q_{k}$ are empty, and all elements of $Q_{l}$ for $k+1\leq l\leq n-1$ are empty. In this case, $|\mathcal{P}_{A_{1}, A_{2}, \ldots, A_{n}}| = 2 + \displaystyle\sum_{j=1}^{k-1}\binom{n}{j} + r,$ where $1\leq r\leq \binom{n}{k}.$ If $k = n - 1$ and $r = \binom{n}{n-1} = n,$ then $|\mathcal{P}_{A_{1}, A_{2}, \ldots, A_{n}}| = 2 + \displaystyle\sum_{j=1}^{n-2}\binom{n}{j} + n = 2^{n}.$ That is, when all elements of $\mathcal{Q}_{0}$ to $\mathcal{Q}_{n}$ are non-empty, we get $|\mathcal{P}_{A_{1}, A_{2}, \ldots, A_{n}}| = 2^{n}.$ Thus, $|\mathcal{P}_{A_{1}, A_{2}, \ldots, A_{n}}|\in \left\{n+2, n+3, \ldots, 2^{n}\right\}.$ 

\noindent Case (ii): $\displaystyle\bigcup_{i=1}^{n}A_{i} = \Omega,$ $\displaystyle\bigcap_{i=1}^{n}A_{i}\neq \phi.$ In this case, we claim
\begin{equation}\label{eqn2.7}
|\mathcal{P}_{A_{1}, A_{2}, \ldots, A_{n}}|\in \left\{n+1, n+2, \ldots, 2^{n}-1\right\}.
\end{equation}
Here $\mathcal{Q}_{0} = \left\{\phi\right\}$ and $\mathcal{Q}_{n}\neq \left\{\phi\right\}$ and so $|\mathcal{Q}_{n}| = 1.$ The argument for the minimum value of $|\mathcal{P}_{A_{1}, A_{2}, \ldots, A_{n}}|$ is the same as that in Case (i). Also, $|\mathcal{P}_{A_{1}, A_{2}, \ldots, A_{n}}|$ is exactly $1$ less than that in Case (i), and so $|\mathcal{P}_{A_{1}, A_{2}, \ldots, A_{n}}|\in \left\{n+1, n+2, \ldots, 2^{n}-1\right\}.$ 

\noindent Case (iii): $\displaystyle\bigcup_{i=1}^{n}A_{i} \neq \Omega,$ $\displaystyle\bigcap_{i=1}^{n}A_{i} = \phi.$ In this case also,  we claim
 \begin{equation}\label{eqn2.8}
|\mathcal{P}_{A_{1}, A_{2}, \ldots, A_{n}}|\in \left\{n+1, n+2, \ldots, 2^{n}-1\right\}.  
\end{equation}
Here $\mathcal{Q}_{0}\neq \left\{\phi\right\}$ and $|\mathcal{Q}_{0}| = 1,$ while $\mathcal{Q}_{n} = \left\{\phi\right\}.$  
Suppose $A_{i}$'s are such that all elements of $\mathcal{Q}_{2}$ to $\mathcal{Q}_{n-1}$ are empty. Let now an element of $\mathcal{Q}_{1},$ say, $A_{1}A_{2}^{c}\ldots A_{n}^{c} = \phi.$ Then 
$A_{1} = \displaystyle\bigcup_{0\leq\sum_{i=2}^{n}\epsilon_{i}\leq n-1} \left(A_{1}A_{2}^{\epsilon_{2}}\ldots A_{n}^{\epsilon_{n}}\right)
= \phi,$ by assumptions, which is a contradiction. Thus, all elements of $\mathcal{Q}_{1}$ are non-empty, so that $|\mathcal{P}_{A_{1}, A_{2}, \ldots, A_{n}}| = 1 + \binom{n}{1} = n + 1$ in this case. 
The rest of the arguments are similar to those in Case (i). Hence, $|\mathcal{P}_{A_{1}, A_{2}, \ldots, A_{n}}|\in \left\{n+1, n+2, \ldots, 2^{n}-1\right\}.$

Case(iv): $\displaystyle\bigcup_{i=1}^{n}A_{i} = \Omega,$ $\displaystyle\bigcap_{i=1}^{n}A_{i} = \phi.$ Our claim in this case is
\begin{equation}\label{eqn2.9} 
|\mathcal{P}_{A_{1}, A_{2}, \ldots, A_{n}}|\in \left\{n, n+1, \ldots, 2^{n}-2\right\}. 
\end{equation}
Here, $\mathcal{Q}_{0} = \mathcal{Q}_{n} = \left\{\phi\right\}.$ Observe first that it is not necessary that $A_{1},A_{2},\ldots,A_{n}$ themselves form a partition of $\Omega.$ The argument for the minimum value of $|\mathcal{P}_{A_{1}, A_{2}, \ldots, A_{n}}|$ is the same as that in Case (i). Also, $|\mathcal{P}_{A_{1}, A_{2}, \ldots, A_{n}}|$ is exactly $1$ less than that in Case (iii), and so $|\mathcal{P}_{A_{1}, A_{2}, \ldots, A_{n}}|\in \left\{n, n+1, \ldots, 2^{n}-2\right\}.$

\noindent Thus, from all the above cases, $|\mathcal{P}_{A_{1}, A_{2},\ldots,A_{n}}| \in \left\{n, n+1, \ldots, 2^{n}\right\}.$ 
\end{proof}

\begin{remark}
If $R_{j} = \displaystyle\bigcup_{i}B_{i},$ where $B_{i}\in \mathcal{Q}_{j},$ for $0\leq j\leq n,$ then $\Omega = \displaystyle\bigcup_{j=0}^{n}R_{j}.$ Note that $\mathcal{R} = \left\{R_{0},\ldots, R_{n}\right\}$ forms a partition of $\Omega.$ 
\end{remark}

\noindent Our main interest is on the construction and the cardinality of finite $\sigma$-fields. Our approach is via the partition induced by the generating class $ \mathcal A=\{A_{1}, A_{2},\ldots,A_{n}\} $. Note first that $\sigma(\mathcal{P}_{A_{1}, A_{2},\ldots,A_{n}})$ is obtained by including the empty set and taking all the sets in $\mathcal{P}_{A_{1}, A_{2},\ldots,A_{n}},$ and all possible unions taken two at a time, three at a time, and so on till the union of all the sets in $\mathcal{P}_{A_{1}, A_{2},\ldots,A_{n}}.$ The following result justifies our approach.

\begin{theorem}\label{thm2.2}
Let $\mathcal{A} = \left\{A_{1},A_{2},\ldots,A_{n}\right\}$ be a class of $n$ subsets of $\Omega$ that satisfies the conditions of Theorem \ref{thm2.1}. Then
\begin{align} 
 (i)~&\sigma(\mathcal{A}) = \sigma(\mathcal{P}_{\mathcal{A}}) \label{2.10}\\
(ii)~&|\sigma(\mathcal{A})| =2^{|\mathcal{P}_{\mathcal{A}}|}.\nonumber
\end{align}
\end{theorem}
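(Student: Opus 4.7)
My plan is to prove (i) first and then deduce (ii) from (i) together with the standard count of the $\sigma$-field generated by a finite partition. For (i), I will establish both inclusions. The inclusion $\sigma(\mathcal{P}_{\mathcal{A}}) \subseteq \sigma(\mathcal{A})$ is immediate: every element of $\mathcal{P}_{\mathcal{A}}$ is a finite intersection of the $A_{i}$'s and their complements and thus lies in $\sigma(\mathcal{A})$, so $\mathcal{P}_{\mathcal{A}} \subseteq \sigma(\mathcal{A})$ and hence $\sigma(\mathcal{P}_{\mathcal{A}}) \subseteq \sigma(\mathcal{A})$. For the reverse inclusion it suffices to check $A_{j} \in \sigma(\mathcal{P}_{\mathcal{A}})$ for each $j$. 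Here I will use the key observation that $\mathcal{P}_{\mathcal{A}}$ genuinely partitions $\Omega$: every $\omega$ lies in exactly one member $\bigcap_{i=1}^{n} A_{i}^{\epsilon_{i}(\omega)}$, where $\epsilon_{i}(\omega)=1$ iff $\omega \in A_{i}$. It follows that
$$A_{j} \;=\; \bigcup \bigl\{\, \bigcap_{i=1}^{n} A_{i}^{\epsilon_{i}} \in \mathcal{P}_{\mathcal{A}} \,:\, \epsilon_{j}=1 \,\bigr\},$$
a finite union of partition elements, so $A_{j}\in\sigma(\mathcal{P}_{\mathcal{A}})$.

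For (ii), in view of (i) it suffices to prove that the $\sigma$-field generated by any finite partition of $\Omega$ into $m:=|\mathcal{P}_{\mathcal{A}}|$ non-empty disjoint pieces $P_{1},\ldots,P_{m}$ has cardinality $2^{m}$. I will introduce
$$\mathcal{F} \;:=\; \bigl\{\, \bigcup_{i\in S} P_{i} \,:\, S \subseteq \{1,\ldots,m\} \,\bigr\},$$
interpreting the empty union as $\phi$, and verify directly that $\mathcal{F}$ is a $\sigma$-field. Closure under finite (and hence countable, since $\mathcal{F}$ is finite) unions is transparent, and closure under complementation follows from the identity $(\bigcup_{i\in S}P_{i})^{c} = \bigcup_{i\notin S}P_{i}$, which uses that the $P_{i}$'s cover $\Omega$. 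Since any $\sigma$-field containing $\mathcal{P}_{\mathcal{A}}$ must contain every such union, $\mathcal{F}=\sigma(\mathcal{P}_{\mathcal{A}})$.

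It remains to count $\mathcal{F}$. The map $S \mapsto \bigcup_{i\in S}P_{i}$ from subsets of $\{1,\ldots,m\}$ into $\mathcal{F}$ is injective: if $S\neq T$ and $i$ lies in their symmetric difference, the non-empty set $P_{i}$ is contained in one of the two unions and disjoint from the other. Hence $|\mathcal{F}|=2^{m}$, and combining with (i) yields $|\sigma(\mathcal{A})|=|\sigma(\mathcal{P}_{\mathcal{A}})|=2^{|\mathcal{P}_{\mathcal{A}}|}$. The main subtlety throughout is to invoke the convention in Definition \ref{2.2} that empty intersections are excluded from $\mathcal{P}_{\mathcal{A}}$: this is exactly what guarantees both that $\mathcal{P}_{\mathcal{A}}$ is a partition of $\Omega$ into non-empty pieces (needed for the decomposition of $A_{j}$ in (i) and for the injectivity in (ii)) and that $|\mathcal{P}_{\mathcal{A}}|$ faithfully counts these pieces. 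The atom hypothesis from Theorem \ref{thm2.1} is not explicitly required in (i) or (ii); it enters only in controlling the range of values $|\mathcal{P}_{\mathcal{A}}|$ can take.
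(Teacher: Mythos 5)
Your proof is correct and follows essentially the same route as the paper: both inclusions for (i), with each $A_{j}$ decomposed as the union of the partition cells having $\epsilon_{j}=1$, and the count $2^{|\mathcal{P}_{\mathcal{A}}|}$ for (ii) by enumerating unions of cells. Your treatment of (ii) is in fact slightly more careful than the paper's, since you explicitly check that $S\mapsto\bigcup_{i\in S}P_{i}$ is injective (using disjointness and non-emptiness of the cells) and that the resulting collection is a $\sigma$-field, points the paper's binomial count takes for granted.
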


\begin{proof} Let  $\epsilon_{i}\in \left\{0,1\right\}$ for $1\leq i\leq n.$ 
Since $A_{i}\in \mathcal{A},$ we have $A_{i}^{\epsilon_{i}}\in \sigma(\mathcal{A}),$ for $1\leq i\leq n$ and hence $A_{1}^{\epsilon_{1}}\ldots A_{n}^{\epsilon_{n}}\in \sigma(\mathcal{A}).$ Thus, $\mathcal{P}_{\mathcal{A}}\subset \sigma(\mathcal{A})$ and hence $\sigma(\mathcal{P}_{\mathcal{A}})\subseteq \sigma(\mathcal{A}).$ \\
Conversely, for $1\leq i\leq n,$
\begin{eqnarray*}
A_{i} &=& A_{i}\bigcap\left(\displaystyle\bigcup_{\epsilon_{j}:j\neq i}A_{1}^{\epsilon_{1}}\ldots A_{i-1}^{\epsilon_{i-1}}A_{i+1}^{\epsilon_{i+1}}\ldots A_{n}^{\epsilon_{n}}\right) \nonumber \\
&=& \displaystyle\bigcup_{\epsilon_{j}:j\neq i}\left(A_{1}^{\epsilon_{1}}\ldots A_{i-1}^{\epsilon_{i-1}}A_{i}A_{i+1}^{\epsilon_{i+1}}\ldots A_{n}^{\epsilon_{n}}\right) \nonumber \\  
& \in & \sigma(\mathcal{P}_{\mathcal{A}}), \nonumber
\end{eqnarray*}   
since each $A_{1}^{\epsilon_{1}}\ldots A_{i-1}^{\epsilon_{i-1}}A_{i}A_{i+1}^{\epsilon_{i+1}}\ldots A_{n}^{\epsilon_{n}}\in \mathcal{P}_{\mathcal{A}}.$ Thus, $\left\{A_{1},A_{2},\ldots,A_{n}\right\} = \mathcal{A}\subset\sigma(\mathcal{P}_{\mathcal{A}})\Longrightarrow \sigma(\mathcal{A})\subseteq \sigma(\mathcal{P}_{\mathcal{A}}).$  Thus, $\sigma(\mathcal{A}) = \sigma(\mathcal{P}_{\mathcal{A}})$.

 \noindent  Let now $|\mathcal{P}_{A_{1}, A_{2},\ldots,A_{n}}| = k.$
 Since $\sigma(\mathcal{P}_{A_{1}, A_{2},\ldots,A_{n}})$ is the $\sigma$-field obtained by taking all possible
 unions of the sets in $\mathcal{P}_{A_{1}, A_{2},\ldots,A_{n}}$, we have
  \begin{eqnarray*}
|\sigma(\mathcal{P}_{A_{1}, A_{2},\ldots,A_{n}})| &=& 1 + \binom{k}{1} + \binom{k}{2} + \ldots + \binom{k}{k} \nonumber \\
&=& 2^{k}, \nonumber
\end{eqnarray*}
where the unity is added for the empty set.  Hence, $ \sigma(\mathcal{P}_{\mathcal{A}})=2^{|\mathcal{P}_{\mathcal{A}}|}$.    
\end{proof}

\noindent It is known that the cardinality of a finite $\sigma$-field is of the form  $2^{m}$ for some $m \in \mathbb{N}$ (see, for example, Rosenthal (2006), p.~24).
The following corollary, which follows from Theorems
\ref{thm2.1} and \ref{thm2.2}, gives the explicit range for $m$.
\begin{corollary}
Let $\mathcal{F}$ be a finitely generated $\sigma$-field of subsets of $\Omega.$ Then $|\mathcal{F}| = 2^{m},$ for some $n\leq m\leq 2^{n}$ and $n\geq 1$.
\end{corollary}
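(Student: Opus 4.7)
The plan is to obtain this corollary as a direct consequence of the two preceding theorems, with only a small amount of bookkeeping to translate their conclusions into the statement about $|\mathcal{F}|$.

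First I would fix notation. Since $\mathcal{F}$ is finitely generated, write $\mathcal{F} = \sigma(\mathcal{A})$ for some finite class $\mathcal{A} = \{A_{1}, A_{2}, \ldots, A_{n}\}$ with $n \geq 1$. Without loss of generality, I may assume $\mathcal{A}$ is $\sigma$-distinct and that each $A_{i}$ is an atom of $\mathcal{A}$ (otherwise, by Remark 2.1 and redundancy, a smaller generating class with these properties would produce the same $\sigma$-field, giving a smaller $n$). This puts us in the setting of Theorem \ref{thm2.1}.

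Next I would apply Theorem \ref{thm2.2}(ii) to write $|\mathcal{F}| = |\sigma(\mathcal{A})| = 2^{|\mathcal{P}_{\mathcal{A}}|}$. Then, invoking Theorem \ref{thm2.1}, the partition cardinality satisfies $|\mathcal{P}_{\mathcal{A}}| \in \{n, n+1, \ldots, 2^{n}\}$. Setting $m = |\mathcal{P}_{\mathcal{A}}|$ gives $|\mathcal{F}| = 2^{m}$ with $n \leq m \leq 2^{n}$, which is exactly the claim.

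The only subtlety is that Theorem \ref{thm2.1} is stated for $n \geq 3$, so I would separately dispose of the cases $n = 1$ and $n = 2$ by direct inspection: for $n = 1$ with $A_{1}$ non-trivial, $\mathcal{P}_{A_{1}} = \{A_{1}, A_{1}^{c}\}$ and $|\mathcal{F}| = 4 = 2^{2}$ with $m = 2 \in \{1,2\}$; for $n = 2$, the preliminary analysis in Section 2 already shows $|\mathcal{P}_{A_{1},A_{2}}| \in \{2,3,4\}$, and an application of Theorem \ref{thm2.2}(ii) (whose proof does not depend on $n \geq 3$) yields $|\mathcal{F}| = 2^{m}$ with $m \in \{2,3,4\}\subset\{2,3,4\} = \{n,\ldots,2^{n}\}$. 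I do not anticipate any real obstacle here; the corollary is essentially a repackaging of Theorems \ref{thm2.1} and \ref{thm2.2}, and the only care needed is to remark on the low-$n$ cases and on the harmless reduction to $\sigma$-distinct atomic generators.
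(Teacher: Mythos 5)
Your proposal follows exactly the paper's route: the corollary is obtained by combining Theorem \ref{thm2.2}(ii) ($|\mathcal{F}| = 2^{|\mathcal{P}_{\mathcal{A}}|}$) with Theorem \ref{thm2.1} (the range of $|\mathcal{P}_{\mathcal{A}}|$), after reducing to a $\sigma$-distinct class of atoms, which is precisely the one-line justification the paper gives. Your treatment is in fact slightly more careful than the paper's, since you explicitly dispose of the $n=1,2$ cases excluded from the hypotheses of Theorem \ref{thm2.1}.
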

Suppose $\mathcal{F}=\sigma(\mathcal{B})$ for some class $\mathcal{B}$ of subsets of $\Omega$. Let $\mathcal{A}$ be the largest $\sigma$-distinct subclass of atoms (of $\mathcal{B}$). Then $m=|\mathcal{P}_{A}|$. 
\begin{remark}
(i) Suppose $A_{1}, A_{2},\ldots,A_{n}$ themselves form a partition of $\Omega.$ Then, from  Lemma \ref{lem2.1} and Theorem \ref{thm2.2}, $|\sigma(A_{1}, A_{2},\ldots,A_{n})| = 2^{n}.$ 
 
\noindent (ii) Suppose $\displaystyle\bigcup_{i=1}^{n}A_{i}\neq \Omega$ and $\displaystyle\bigcap_{i=1}^{n}A_{i}\neq \phi.$ Also, if all elements of $\mathcal{Q}_{j}$ for $1\leq j\leq n-1$ are non-empty, then $|\mathcal{P}_{A_{1}, A_{2},\ldots,A_{n}}| = 2^{n}$ (Case (i) of the proof of Theorem \ref{thm2.2}) and hence,
$|\sigma(A_{1}, A_{2},\ldots,A_{n})| = 2^{2^{n}},$ a known result (Ash and Doleans-Dade (2000), p~457.)

\end{remark}

\begin{example} (i) Let $\mathcal{A}= \{A\}$.
 Then $\mathcal{P}_{A} = \left\{A, A^{c}\right\}$ and \\
$$|\mathcal{P}_{A}| = 2,  \sigma(A) = \sigma(\mathcal{P}_{A}) = \left\{\phi, \Omega, A, A^{c}\right\}, |\sigma(A)| = |\sigma(\mathcal{P}_{A})| = 4.$$  

(ii) Let $\mathcal{A}= \{A, B, C\}$ be a collection of three $\sigma$-distinct subsets  such that $A\cup B\cup C \neq \Omega.$ Then $$\mathcal{P}_{A,B,C} = \left\{ABC, A^{c}BC, AB^{c}C, ABC^{c}, A^{c}B^{c}C, A^{c}BC^{c}, AB^{c}C^{c}, A^{c}B^{c}C^{c}\right\}.$$ 
 The different possible values of $|\mathcal{P}_{A,B,C}|$ are listed in the following cases: 
\begin{enumerate}
\item[(i)] If $ABC \neq \phi,$ then $\mathcal{P}_{A,B,C} = \left\{ABC, A^{c}BC, AB^{c}C, ABC^{c}, A^{c}B^{c}C, A^{c}BC^{c}, \right. \\
\left. AB^{c}C^{c}, A^{c}B^{c}C^{c}\right\}.$ 
Also, $|\mathcal{P}_{A,B,C}| = 8$ and $|\sigma(A,B,C)| = |\sigma(\mathcal{P}_{A,B,C})| = 2^{8} = 256.$ 
\item[(ii)] If only $ABC = \phi,$ then $\mathcal{P}_{A,B,C} = \left\{A^{c}BC, AB^{c}C, ABC^{c}, A^{c}B^{c}C, A^{c}BC^{c}, AB^{c}C^{c}, \right. \\ \left. A^{c}B^{c}C^{c}\right\}.$ 
Also, $|\mathcal{P}_{A,B,C}| = 7$ and $|\sigma(A,B,C)| = |\sigma(\mathcal{P}_{A,B,C})| = 2^{7} = 128.$   
\item[(iii)] If  $AB = \phi,$ then $\mathcal{P}_{A,B,C} = \left\{A^{c}BC, AB^{c}C, A^{c}B^{c}C, A^{c}BC^{c}, \right. \\ \left. AB^{c}C^{c}, A^{c}B^{c}C^{c}\right\}.$ 
Also, $|\mathcal{P}_{A,B,C}| = 6$ and $|\sigma(A,B,C)| = |\sigma(\mathcal{P}_{A,B,C})| = 2^{6} = 64.$   
\item[(iv)] If  $AB = AC = \phi,$ then $\mathcal{P}_{A,B,C} = \left\{A^{c}BC, A^{c}B^{c}C, A^{c}BC^{c}, \right. \\
\left. AB^{c}C^{c}, A^{c}B^{c}C^{c}\right\}.$ 
Also, $|\mathcal{P}_{A,B,C}| = 5$ and $|\sigma(A,B,C)| = |\sigma\mathcal{P}_{A,B,C})| = 2^{5} = 32.$   
\item[(v)] If $AB = AC = BC = \phi,$ then $\mathcal{P}_{A,B,C} = \left\{A^{c}B^{c}C, A^{c}BC^{c}, AB^{c}C^{c}, \right. \\
\left. A^{c}B^{c}C^{c}\right\}.$ 
Also, $|\mathcal{P}_{A,B,C}| = 4$ and $|\sigma(A,B,C)| = |\sigma(\mathcal{P}_{A,B,C})| = 2^{4} = 16.$ 

\noindent Also, in this case,
\begin{eqnarray*}
\sigma(A,B,C) &=& \sigma(\mathcal{P}_{A,B,C}) \nonumber \\
&=& \left\{\phi, \Omega, A, B, C, A^{c}, B^{c}, C^{c}, A\cup B, A\cup C, B\cup C, \right. \nonumber \\
& & \left. A\cup B\cup C, A^{c}B^{c}C^{c}, A^{c}B^{c}, A^{c}C^{c}, B^{c}C^{c} \right\}. \nonumber 
\end{eqnarray*}
\end{enumerate}
Thus, we see that $| \sigma(A,B,C)| \in \{16, 32, 64, 128, 256 \}$. This result easily follows also from \eqref{eqn2.6} and \eqref{eqn2.8} and  Theorem\ref{thm2.2}.
\end{example}       

\section{Exact cardinality of some finite $\sigma$-fields} 
Let $\overline{n} = \left\{1,2,\ldots,n\right\}.$ Given a class $\mathcal{A} = \left\{A_{1},A_{2},\ldots,A_{n}\right\}$ of $\sigma$-distinct sets, we call henceforth $A_{i}A_{j},i\neq j,$ $2$-factor intersections, $A_{i}A_{j}A_{k},i\neq j\neq k,$ $3$-factor intersections, and so on. Clearly, there is only one $n$-factor intersection, namely $A_{1}A_{2}\ldots A_{n}.$ We denote $A_{{i_1}}A_{{i_2}}  \ldots A_{i_k} = A_{S},$ where $S = \left\{i_1, i_2,\ldots, i_k \right\}.$ We now discuss a simple algorithm to find the exact cardinality of some special $\sigma$-fileds:\\

\noindent Let $\bigcup_{1}^{n}A_i \neq \Omega$, so that $|Q_0|=1.$ Note that an element $A_1A_{2}^{c}\ldots A_{n}^{c} \in Q_1$ is empty if $A_1 \subset (\bigcup_{j \neq 1}^{n}A_j)$.  Also, let $s_1$ denote the number of 
$A_{i}$'s such that $A_{i}\subset \displaystyle\bigcup_{j=1;j\neq i}^{n}A_{j}$. Then, $n-s_1 = |Q_1|$, the cardinality of $Q_1$.

\noindent  Let now  $3\leq l\leq n.$ Suppose  none of the $j$-factor intersections, for $2\leq j\leq l-1,$ is empty and only $k$ of the $l$-factor intersections, say, $A_{L_{1}},\ldots,A_{L_{k}}$ are empty for some $L_{i}\subseteq \overline{n}$ with $|L_{i}| = l, 1\leq i\leq k.$ Define
\begin{equation*}
S_{L_{i}} = \left\{B|L_{i}\subseteq B\subseteq \overline{n}\right\}.
\end{equation*} 
We call $S_{L_{i}}$ the set of indices  of implied empty intersections of $L_{i},$ as $A_{B} = \phi,~\forall~B\in S_{L_{i}}.$ Then
\begin{equation}\label{eqn1}
|\mathcal{P}_{A_{1},A_{2},\ldots,A_{n}}| = 2^{n} - \left|\displaystyle\bigcup_{i=1}^{k}S_{L_{i}}\right|- s_1,
\end{equation} 
if none of the higher order intersections $A_M$ is empty, where $l<|M| \leq n-1$ and $M \not \in \bigcup_{i=1}^{k}S_{L_{i}}.$

\noindent If there exist $M_1, \ldots, M_r$ such that $l<|M_i| \leq n-1$ and $M_i \not \in \bigcup_{i=1}^{k}S_{L_{i}},$ then 
\begin{equation}\label{eqn2}
|\mathcal{P}_{A_{1},A_{2},\ldots,A_{n}}| = 2^{n} - \left|\displaystyle (\bigcup_{i=1}^{k}S_{L_{i}})
 \bigcup (\bigcup_{j=1}^{r}S_{M_{j}} )   \right|- s_1.
\end{equation}
Indeed, it suffices to consider only those $M_j$'s  such that
 \begin{equation*}
  M_j \not \in (\bigcup_{i=1}^{k}S_{L_i}) \bigcup (\bigcup_{r=1}^{j-1}S_{M_r}),
\end{equation*}
for $ 2 \leq j \leq n-1.$\\

\noindent We next consider some particular cases of interest under the case $s_1=0$. \\
(i) When none of the intersections of $A_j$'s  of order two or more is empty,
\begin{equation*}
|\mathcal{P}_{A_{1},A_{2},\ldots,A_{n}}| = 2^{n}.
\end{equation*}
(ii) When only the $(n - 1)$-factor intersections, say $q_{n-1}$ in number,  are empty, then
\begin{equation*}
|\mathcal{P}_{A_{1},A_{2},\ldots,A_{n}}| = 2^{n} - q_{n-1} -1,
\end{equation*} 
since $A_{1}\ldots A_{n}$ is the only implied empty intersection.  \\
(iii) When all the $2$-factor intersections $A_{i}A_{j} = \phi $, then all the collections $\mathcal{Q}_{2}$ to $\mathcal{Q}_{n}$ are empty and in this case all the elements of $\mathcal{Q}_{1}$ are non-empty. Thus,
\begin{equation*}
|\mathcal{P}_{A_{1},A_{2},\ldots,A_{n}}| = n + 1,
\end{equation*} 
since $\displaystyle\bigcup_{i =1}^{n} A_{i}\neq\Omega$ which implies $Q_0$ is nonempty.

\noindent Two typical examples follow.
\begin{example} Let $\Omega= \{5, 6, \ldots, 20 \}$ and consider the sets $ A_1= \{8, 10, 14, 16, 17 \}$,
$ A_2= \{6, 7, 18 \}$, $ A_3= \{7, 8, 9, 14, 16,  19 \}$ and  $ A_4= \{9, 10, 11, 14, 20 \}$.
Then $\displaystyle\bigcup_{i=1}^{4}A_{i} \neq \Omega,$ and only  $A_{1}A_{2} = A_{2}A_{4} =  \phi$. Hence, $L_{1} = \left\{1,2\right\}, L_{2} = \left\{2,4\right\}$ and
\begin{enumerate}
\item[(i)] $S_{L_{1}} = \left\{\{1,2\},\{1,2,3\},\{1,2,4\},\{1,2,3,4\}\right\},$
\item[(ii)] $S_{L_{2}} = \left\{\{2,4\},\{1,2,4\},\{2,3,4\},\{1,2,3,4\}\right\}.$
\end{enumerate}
Thus, $|S_{L_1} \cup S_{L_2}| = 6.$ Also, here $s_1=0.$ Using (\ref{eqn1}), we get 
$|\mathcal{P}_{A_{1},A_{2},A_{3},A_{4}}| = 2^{4} - 6 = 10$  and by Theorem \ref{thm2.2}, $|\sigma(A_{1},A_{2},A_{3},A_{4})| = 2^{10} = 1024.$ 
\end{example}

\begin{example}
Consider the case where $\Omega = \{5, 6, \ldots, 15\}, A_{1} = \left\{8, 9,10\right\}, A_{2} = \left\{7,11, 12, 15\right\},\\ A_{3} = \left\{7,11, 12, 13 \right\}$ and $A_{4} = \left\{8,10, 13, 15 \right\}.$ Then $\displaystyle\bigcup_{i=1}^{4}A_{i} \neq \Omega$ and  $A_{1}A_{2} = A_{1}A_{3} = \phi$ are the
only 2-factor intersections so that $L_{1} = \left\{1,2\right\}, L_{2} = \left\{1,3\right\}$ and 
\begin{enumerate}
\item[(i)] $S_{L_{1}} = \left\{\{1,2\},\{1,2,3\},\{1,2,4\},\{1,2,3,4\}\right\},$
\item[(ii)] $S_{L_{2}} = \left\{\{1,3\},\{1,2,3\},\{1,3,4\},\{1,2,3,4\}\right\},$
\end{enumerate}

\noindent Also, $A_{2}A_{3}A_{4} = \phi$ so that $M = \left\{2,3,4\right\} \not\in S_{L_1} \cup S_{L_2}$  and 
 $S_{M} = \left\{\{2,3,4\},\{1,2,3,4\}\right\}.$
Therefore, $|S_{L_1} \cup S_{L_2} \cup S_M| = 7.$ Note also that $A_{2}\subset \displaystyle\bigcup_{j\neq 2}^{4}A_{j}, A_{3}\subset \displaystyle\bigcup_{j\neq 3}^{4}A_{j}$ and $A_{4}\subset \displaystyle\bigcup_{j\neq 4}^{4}A_{j},$ so that $s_{1} = 3.$
\noindent Using (\ref{eqn2}),  $|\mathcal{P}_{A_{1},A_{2},A_{3},A_{4}}| = 2^{4} - 7 - 3 = 6$ and, by Theorem \ref{thm2.2}, $|\sigma(A_{1},A_{2},A_{3},A_{4})| = 2^{6} = 64.$
\end{example}   

\noindent Finally, we look at the question of cardinality $\sigma(\mathcal{B},{D}),$ where  $D\not\in \mathcal{B}$ and $\mathcal{B} = \sigma(\mathcal{A})$ is a $\sigma$-field. Here, we need to talk about $\mathcal{P} \{\mathcal{B},{D}\},$ where the members  are not $\sigma$-distinct.

\begin{definition} Let $\mathcal{A}= \{A_1, \ldots, A_n\}$ be an arbitrary class of $n$ subsets of $\Omega.$ Then
the  partition $\mathcal{P}_{\mathcal{A}}^{\ast}$ induced by $\mathcal{A}$ is defined as the subclass 
 of $\mathcal{P}_{\mathcal{A}}$, defined in (\ref{eqn2.1}), where each element of $\mathcal{P}_{\mathcal{A}}^{\ast}$ is an atom. That is, the sets in $\mathcal{P}_{\mathcal{A}}$ which can be obtained as the union of other sets, are removed from $\mathcal{P}_{\mathcal{A}}$ to obtain $\mathcal{P}_{\mathcal{A}}^{\ast}.$         
\end{definition}

\noindent For simplicity and continuity, we denote $\mathcal{P}_{\mathcal{A}}^{\ast}$ by $\mathcal{P}_{\mathcal{A}}$ only.

\begin{theorem}
Let $\mathcal{A} = \left\{A_{1},A_{2},\ldots,A_{n}\right\}$ be a class of $\sigma$-distinct sets, and $\mathcal{B} = \sigma(\mathcal{A})$ be the generated $\sigma$-field. Let $D\subset\Omega$ and $D\not\in\mathcal{B}.$ Then
\begin{equation} \label{neqn3.3}
|\sigma(\mathcal{B},D)| = 2^ {|\mathcal{P}_{\mathcal{A}, D}|}.
\end{equation}   
\end{theorem}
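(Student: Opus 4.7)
The plan is to mimic the two-step argument of Theorem \ref{thm2.2}, taking advantage of the fact that its proof structure did not really use the $\sigma$-distinctness/atom hypotheses of Theorem \ref{thm2.1} — those hypotheses were only needed for the cardinality \emph{range} result (Theorem \ref{thm2.1}), not for the identity $\sigma(\mathcal{A})=\sigma(\mathcal{P}_{\mathcal{A}})$ nor for the formula $|\sigma(\mathcal{P}_{\mathcal{A}})|=2^{|\mathcal{P}_{\mathcal{A}}|}$. So the game is to apply that identity to the enlarged class $\mathcal{C}=\mathcal{A}\cup\{D\}=\{A_{1},\ldots,A_{n},D\}$, using the reduced-atom partition convention of the definition just before the theorem.

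First I would collapse $\mathcal{B}$ to its generators: since $\mathcal{B}=\sigma(\mathcal{A})$, adjoining $D$ gives $\sigma(\mathcal{B},D)=\sigma(\mathcal{A}\cup\{D\})=\sigma(\mathcal{C})$. Next I would observe that the induced partition of $\mathcal{C}$ is precisely
\[
\mathcal{P}_{\mathcal{A},D}=\bigl\{A_{1}^{\epsilon_{1}}\cdots A_{n}^{\epsilon_{n}}D^{\delta}:\epsilon_{i},\delta\in\{0,1\}\bigr\}\setminus\{\phi\},
\]
which is a genuine finite partition of $\Omega$ into disjoint non-empty pieces, with every member automatically an atom of the collection (so $\mathcal{P}_{\mathcal{A},D}^{\ast}=\mathcal{P}_{\mathcal{A},D}$).

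Then I would replay the two inclusions of Theorem \ref{thm2.2}(i) for $\mathcal{C}$: on one hand each $A_{i}^{\epsilon_{i}}D^{\delta}$ lies in $\sigma(\mathcal{C})$, so $\mathcal{P}_{\mathcal{A},D}\subset\sigma(\mathcal{C})$ and $\sigma(\mathcal{P}_{\mathcal{A},D})\subseteq\sigma(\mathcal{C})$; on the other hand, exactly as in the original proof, each $A_{i}$ (and $D$ itself) is the union over the remaining $\epsilon$'s of elements of $\mathcal{P}_{\mathcal{A},D}$, giving $\mathcal{C}\subset\sigma(\mathcal{P}_{\mathcal{A},D})$ and the reverse inclusion. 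This yields $\sigma(\mathcal{B},D)=\sigma(\mathcal{P}_{\mathcal{A},D})$. Finally, since $\mathcal{P}_{\mathcal{A},D}$ is a finite partition, its generated $\sigma$-field consists in bijection with subsets of the partition (distinct subsets yield distinct unions because the blocks are disjoint and non-empty), so its cardinality is $2^{|\mathcal{P}_{\mathcal{A},D}|}$, as claimed in \eqref{neqn3.3}.

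The main obstacle, and the only real subtlety, is that $\mathcal{C}$ need \emph{not} satisfy the hypotheses of Theorem \ref{thm2.1}: $D$ may be comparable to some $A_{i}$ under inclusion, or be $\sigma$-distinct from $\mathcal{A}$ only because $D\notin\mathcal{B}$. I would therefore be careful not to invoke Theorem \ref{thm2.2} as a black box on $\mathcal{C}$; instead I would note explicitly that the inclusion argument and the cardinality count each rest only on $\mathcal{P}_{\mathcal{A},D}$ being a finite partition of $\Omega$ (which is automatic by the defining formula, regardless of redundancies in $\mathcal{C}$), together with the passage to $\mathcal{P}^{\ast}$ introduced in the definition immediately before the theorem to strip away any repeated or redundantly-indexed blocks.
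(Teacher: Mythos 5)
Your proposal is correct and follows essentially the same route as the paper: both reduce $\sigma(\mathcal{B},D)$ to $\sigma(\mathcal{P}_{\mathcal{A},D})$ (the paper via the identity $\mathcal{P}_{\sigma(\mathcal{A}),D}=\mathcal{P}_{\mathcal{A},D}$, you via $\sigma(\mathcal{B},D)=\sigma(\mathcal{A}\cup\{D\})$) and then count $2^{|\mathcal{P}_{\mathcal{A},D}|}$. Your only real departure is a point of care rather than of substance: instead of citing Theorem \ref{thm2.2} as a black box (the paper asserts that $\{\mathcal{A},D\}$ is $\sigma$-distinct and applies it), you inline its two-inclusion argument and the disjoint-blocks counting so that nothing depends on the hypotheses of Theorem \ref{thm2.1}, which is a sensible tightening.
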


\begin{proof}
Let $\mathcal{P}_{\mathcal{A},D}$ be the finest partition induced by the class $\left\{\mathcal{A},D\right\}.$ Then
\begin{eqnarray*}
\mathcal{P}_{\mathcal{A},D} &=& \left\{A_{1}^{\epsilon_{1}}\ldots A_{n}^{\epsilon_{n}}D^{\epsilon_{n+1}}~\middle|~\epsilon_{j}\in\left\{0,1\right\}, 0\leq\displaystyle\sum_{j=1}^{n+1}\epsilon_{j}\leq n+1\right\} \nonumber \\ 
&=& \left\{A_{1}^{\epsilon_{1}}\ldots A_{n}^{\epsilon_{n}}D~\middle|~\epsilon_{j}\in\left\{0,1\right\}, 0\leq\displaystyle\sum_{j=1}^{n}\epsilon_{j}\leq n\right\} \nonumber \\
& & \displaystyle\bigcup\left\{A_{1}^{\epsilon_{1}}\ldots A_{n}^{\epsilon_{n}}D^{c}~\middle|~\epsilon_{j}\in\left\{0,1\right\}, 0\leq\displaystyle\sum_{j=1}^{n}\epsilon_{j}\leq n\right\}. \nonumber
\end{eqnarray*} 
When none of the elements of $\mathcal{P}_{\mathcal{A},D} $ is empty,
\begin{equation}\label{eqn3.3}
|\mathcal{P}_{\mathcal{A},D}| = 2|\mathcal{P}_{\mathcal{A}}|. 
\end{equation}
Note also that $\mathcal{P}_{\sigma(\mathcal{A})} = \mathcal{P}_{\mathcal{A}},$ since $\sigma(\mathcal{A})$ is the collection of all possible unions of the sets in $\mathcal{P}_{\mathcal{A}}.$ Hence,
\begin{equation}\label{eqn3.4}
\mathcal{P}_{\sigma(\mathcal{A}),D} = \mathcal{P}_{\mathcal{A},D}
\end{equation} 
for $D\not\in\sigma(\mathcal{A}).$
Observe also that the sets in $\{\mathcal{A}, D \}$ are $\sigma$-distinct. Now
\begin{eqnarray*}
\sigma(\mathcal{B},D) = \sigma(\mathcal{P}_{\mathcal{B},D}) 
= \sigma(\mathcal{P}_{\sigma(\mathcal{A}),D}) 
= \sigma(\mathcal{P}_{\mathcal{A},D}).
\end{eqnarray*}
Hence, from Theorem \ref{thm2.2},
\begin{eqnarray*}
|\sigma(\mathcal{B},D)| &=& 2^{|\mathcal{P}_{\mathcal{A},D}|},
\end{eqnarray*}
which proves the result.
\end{proof}

\begin{remark} (i) When none of the elements of $\mathcal{P}_{\mathcal{A},D} $ is empty, we have from \eqref{neqn3.3}  and \eqref{eqn3.3},
\begin{eqnarray} \label{neqn3.5}
|\sigma(\mathcal{B},D)| &=& 2^{|\mathcal{P}_{\mathcal{A},D}|} 
= 2^{2|\mathcal{P}_{\mathcal{A}}|} 
= (2^{|\mathcal{P}_{\mathcal{A}}|})^{2} 
= |\mathcal{B}|^{2}.
\end{eqnarray}

\noindent (ii) It follows from (\ref{eqn3.4}) that 
\begin{eqnarray} \label{neqn3.6}
\sigma(\sigma(\mathcal{A}),D) = \sigma(\mathcal{P}_{\sigma(\mathcal{A}),D}) 
= \sigma(\mathcal{P}_{\mathcal{A},D}) 
= \sigma(\mathcal{A},D),
\end{eqnarray}
an expected result.
\end{remark}


\begin{example}
Consider the simplest case, where $\mathcal{B} = \sigma(A) = \left\{\phi, \Omega, A, A^{c}\right\}.$ Let $D\not\in \mathcal{B}$ be such $AD \neq \phi$ and $A\cup D \neq \Omega.$ Then  $\{\mathcal{{B},{D}}\} = \left\{\phi, \Omega, A, A^{c}, D\right\}$  and $\mathcal{P}_{\mathcal{A},D} = \left\{AD, A^{c}D, AD^{c}, A^{c}D^{c}\right\} = \mathcal{P}_{A,D}.$ Note here  all the  elements of $\mathcal{P}_{A,D}$ are non-empty. Hence, $|\mathcal{P}_{\mathcal{B},D}| = |\mathcal{P}_{A,D}| = 2|\mathcal{P}_{A}| = 4$ and $|\sigma(\mathcal{B},D)| = |\mathcal{B}|^{2} = 16.$  
\end{example}

\begin{example}
Consider the case when $A_{1}$ and $A_{2}$ are two $\sigma$-distinct subsets of $\Omega$ such that $A_{1}\cup A_{2}\neq\Omega$ and $A_{1}\cap A_{2} = \phi.$ Let $\mathcal{A} = \left\{A_{1},A_{2}\right\}$ and $\mathcal{B} = \sigma(\mathcal{A}).$ Here $\mathcal{P}_{\mathcal{A}} = \left\{A_{1}^{c}A_{2},A_{1}A_{2}^{c},A_{1}^{c}A_{2}^{c}\right\}$ and $\mathcal{B} = \left\{\phi, \Omega, A_{1}, A_{2}, A_{1}^{c}, A_{2}^{c}, A_{1}\cup A_{2}, A_{1}^{c}\cap A_{2}^{c} \right\}.$ Let $D\not\in \mathcal{B}$ be a non-empty subset of $\Omega,$ so that $\left\{\mathcal{B},D\right\} = \left\{\phi, \Omega, A_{1}, A_{2}, A_{1}^{c}, A_{2}^{c}, A_{1}\cup A_{2}, A_{1}^{c}\cap A_{2}^{c}, D \right\}$ and $\left\{\mathcal{A},D\right\} = \left\{A_{1},A_{2},D\right\}.$ Then $\mathcal{P}_{\mathcal{B},D} = \left\{A_{1}^{c}A_{2}D, A_{1}A_{2}^{c}D, A_{1}^{c}A_{2}^{c}D, A_{1}^{c}A_{2}D^{c}, A_{1}A_{2}^{c}D^{c}, A_{1}^{c}A_{2}^{c}D^{c}\right\} \\ = \mathcal{P}_{\mathcal{A},D}.$ Assume all elements of $\mathcal{P}_{\mathcal{A},D}$ to be non-empty. Then $|\mathcal{P}_{\mathcal{B},D}| = |\mathcal{P}_{\mathcal{A},D}| = 2|\mathcal{P}_{\mathcal{A}}| = 6.$ Hence, $|\sigma(\mathcal{B},D)| = 2^{|\mathcal{P}_{\mathcal{B},D}|} = 2^{2|\mathcal{P}_{\mathcal{A}}|} = |\sigma(\mathcal{A})|^{2} = |\mathcal{B}|^{2} = 64.$  

\end{example}

\section{An Application to Independence of Events} 

Let us consider three events $A, B$ and $C$ such that $A$ is independent of $B$ (denoted by $A\ci B$) and $A$ is independent of $C$ (denoted by $A\ci C$).  It is known that these assumptions neither imply  $A\ci (B\cup C)$ nor $A\ci (BC)$, unless $B$ and $C$ are disjoint (see Whittaker (1990) p.~ 27). However, we have the following lemma.
\begin{lemma}\label{lem1}
Let $A\ci B$ and $A\ci C$. Then $A\ci (B\cup C)$ iff $A\ci (B C).$ 
\end{lemma}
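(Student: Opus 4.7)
The plan is to reduce both independence statements to a single algebraic identity via inclusion--exclusion, so that the equivalence becomes transparent.

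First, I would write down what each of the two target independence statements means in terms of probabilities. The statement $A \ci (B \cup C)$ is $P(A(B \cup C)) = P(A)P(B \cup C)$, and the statement $A \ci (BC)$ is $P(ABC) = P(A)P(BC)$.

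Next, I would expand $P(A(B \cup C))$ using inclusion--exclusion applied to the union $AB \cup AC$, obtaining
\begin{equation*}
P(A(B \cup C)) = P(AB) + P(AC) - P(ABC),
\end{equation*}
and similarly $P(B \cup C) = P(B) + P(C) - P(BC)$. Now I would use the two hypotheses $A \ci B$ and $A \ci C$, which give $P(AB) = P(A)P(B)$ and $P(AC) = P(A)P(C)$. Substituting these,
\begin{equation*}
P(A(B \cup C)) - P(A)P(B \cup C) = P(A)P(BC) - P(ABC).
\end{equation*}
The left--hand side vanishes exactly when $A \ci (B \cup C)$, and the right--hand side vanishes exactly when $A \ci (BC)$, which gives the desired equivalence in both directions simultaneously.

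There is essentially no obstacle here; the only thing worth double--checking is the sign bookkeeping in inclusion--exclusion and the fact that both implications fall out of the same identity, so the ``iff'' requires no separate argument for the two directions. This is consistent with the remark quoted from Whittaker: neither conclusion follows from $A \ci B$ and $A \ci C$ alone unless $BC = \phi$ (in which case $P(BC) = 0 = P(A)P(BC)$ trivially, and then both sides of the identity collapse, recovering the classical disjoint case).
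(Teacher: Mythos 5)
Your proof is correct and follows essentially the same route as the paper: both expand $P(A(B\cup C))$ by inclusion--exclusion, substitute the hypotheses $P(AB)=P(A)P(B)$ and $P(AC)=P(A)P(C)$, and arrive at the identity $P(A(B\cup C)) - P(A)P(B\cup C) = P(A)P(BC) - P(ABC)$, from which both directions of the equivalence follow at once.
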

\begin{proof} Consider
\begin{eqnarray*}
P[A\cap (B\cup C)] &=& P[(A B) \cup (A C)] \nonumber \\
&=& P(A B) + P(A C) - P(A B C) \nonumber \\
&=& P(A)[P(B) + P(C)] - P(A B C) \quad\textrm{(by assumption)} \nonumber \\
&=& P(A)[P(B\cup C) + P(B C)] - P(A B C), \nonumber
\end{eqnarray*}
so that
\begin{equation}\label{eq}
P[A \cap (B\cup C)] - P(A)[P(B\cup C)] = P(A)[P(B C)] - P(A B C). 
\end{equation}
Hence, $A\ci (B\cup C)$ (that is, L.H.S. of (\ref{eq}) = 0) iff $A\ci (B C)$ (that is, R.H.S. of (\ref{eq}) = 0).
\end{proof}

\begin{lemma}\label{lem2}
If $A\ci B,$ $A\ci C$ and $A\ci (B\cup C),$ then $A\ci \mathcal{P}_{B,C}.$  
\end{lemma}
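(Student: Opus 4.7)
The plan is to show that $A$ is independent of each element of the partition $\mathcal{P}_{B,C} = \{BC, B^{c}C, BC^{c}, B^{c}C^{c}\}$ (non-empty atoms only). First I would invoke Lemma \ref{lem1}: since $A \ci B$, $A \ci C$ and $A \ci (B\cup C)$, the lemma immediately yields $A \ci (BC)$. That handles the first atom.

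Next I would handle $A \ci (BC^{c})$ by decomposing $B = BC \cup BC^{c}$ (disjoint union), so
\begin{align*}
P(A \cap BC^{c}) &= P(AB) - P(ABC) \\
&= P(A)P(B) - P(A)P(BC) \\
&= P(A)\bigl[P(B) - P(BC)\bigr] = P(A)P(BC^{c}),
\end{align*}
where I used $A \ci B$ and $A \ci (BC)$ (the latter just obtained). An entirely symmetric computation, swapping the roles of $B$ and $C$, gives $A \ci (B^{c}C)$.

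Finally, for $A \ci (B^{c}C^{c})$, I would use $B^{c}C^{c} = (B\cup C)^{c}$ and the hypothesis $A \ci (B\cup C)$:
\begin{align*}
P(A \cap B^{c}C^{c}) &= P(A) - P(A \cap (B\cup C)) \\
&= P(A) - P(A)P(B\cup C) \\
&= P(A)P(B^{c}C^{c}).
\end{align*}

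This establishes independence of $A$ with each member of $\mathcal{P}_{B,C}$, hence $A \ci \mathcal{P}_{B,C}$. No step is really an obstacle; the mild subtlety is simply recognizing that each atom of $\mathcal{P}_{B,C}$ can be written as a signed combination of events from $\{B, C, BC, B\cup C\}$, all of which are already known to be independent of $A$ by hypothesis or by Lemma \ref{lem1}.
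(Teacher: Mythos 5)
Your proof is correct and follows essentially the same route as the paper: invoke Lemma \ref{lem1} for $A\ci(BC)$, decompose $B$ (resp.\ $C$) to handle $BC^{c}$ and $B^{c}C$, and use complementation of $B\cup C$ for $B^{c}C^{c}$. The only cosmetic difference is that you write out the complementation step explicitly where the paper merely cites it.
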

\begin{proof} We need to show that under the assumptions, $A \ci E$,  for every $ E \in  \left\{BC, B^{c}C, BC^{c}, B^{c}C^{c}\right\}= \mathcal{P}_{B,C}$. 
Using Lemma \ref{lem1} and the assumptions, we have $A\ci (B C).$ Since, 
\begin{eqnarray*}
P[A \cap (B^{c} C)] &=& P(A C) - P(A B C) \nonumber \\
&=& P(A)P(C) - P(A)P(B C) \nonumber \\
&=& P(A)[P(C) - P(B C)] \nonumber \\
&=& P(A)P(B^{c} C) \nonumber
\end{eqnarray*}
shows that $A\ci (B^{c}\cap C).$  Similarly,   $A\ci (B C^{c})$, by symmetry.
 Further, $A$ is independent of $B\cup C$ implies $A$ is independent of $(B\cup C)^{c} = B^{c} C^{c}.$ 
Hence, $A\ci \mathcal{P}_{B,C}.$ 
\end{proof}

\noindent The following stronger form of independence of three events is used  especially in graphical models
 (see Whittaker (1990) p.~ 27).
\begin{definition}\label{def4.1}
$A\ci [B,C]$ if A is independent of $E,$  for every $E\in \mathcal{P}_{B,C}$.
\end{definition}

\noindent As another application  Theorem \ref{thm2.2}, we have the following result which states that $A\ci [B,C]$
implies the independence of $A \ci E$, where $E \in  \sigma(B,C)$ and  $ 4 \leq |\sigma(B,C)| \leq 16.$ 

\begin{theorem}
The following statements are equivalent.
\begin{enumerate}
\item[(i)] $A\ci [B,C].$ 
\item[(ii)] $A\ci B, ~ A\ci C$ and $A\ci (B\cup C).$
\item[(iii)] $A\ci \sigma(B,C).$
\end{enumerate}
\end{theorem}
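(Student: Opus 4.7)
The plan is to prove the equivalences via the cycle (iii) $\Rightarrow$ (ii) $\Rightarrow$ (i) $\Rightarrow$ (iii), exploiting the fact that each statement in the cycle is progressively stronger-looking, with the key leverage coming from Lemma \ref{lem2} and from the identity $\sigma(B,C) = \sigma(\mathcal{P}_{B,C})$ established at the beginning of Section 2 (and formalized for $n \geq 3$ in Theorem \ref{thm2.2}).

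The implication (iii) $\Rightarrow$ (ii) is immediate: since $B$, $C$, and $B\cup C$ are all elements of $\sigma(B,C)$, the assumption $A \ci \sigma(B,C)$ trivially forces $A \ci B$, $A \ci C$, and $A \ci (B\cup C)$. Next, (ii) $\Rightarrow$ (i) is exactly the content of Lemma \ref{lem2}, already proved: from $A \ci B$, $A \ci C$, and $A \ci (B\cup C)$, Lemma \ref{lem1} yields $A \ci (BC)$, and then the short computations in the proof of Lemma \ref{lem2} give $A \ci (B^cC)$, $A \ci (BC^c)$, and $A \ci (B^cC^c)$, so that $A \ci E$ for every $E \in \mathcal{P}_{B,C}$.

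The substantive step is (i) $\Rightarrow$ (iii). Using the identity $\sigma(B,C) = \sigma(\mathcal{P}_{B,C})$ from Section 2, every element $E \in \sigma(B,C)$ is either empty or a finite disjoint union of members of the partition $\mathcal{P}_{B,C}$; write $E = \bigsqcup_{i=1}^{k} E_i$ with $E_i \in \mathcal{P}_{B,C}$. By hypothesis (i), $P(A \cap E_i) = P(A)\,P(E_i)$ for each $i$. Finite additivity of $P$ then gives
\begin{equation*}
P(A \cap E) \;=\; \sum_{i=1}^{k} P(A \cap E_i) \;=\; P(A)\sum_{i=1}^{k} P(E_i) \;=\; P(A)\,P(E),
\end{equation*}
so $A \ci E$. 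Since $E \in \sigma(B,C)$ was arbitrary, $A \ci \sigma(B,C)$, which closes the cycle.

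There is no serious obstacle here: the only nontrivial ingredient is the structural fact that the atoms of $\sigma(B,C)$ are precisely the nonempty members of $\mathcal{P}_{B,C}$, which is exactly what the generated-partition machinery of Section 2 delivers. The main thing to be careful about is simply noting that the disjointness of the $E_i$'s permits the use of finite additivity, converting pointwise independence on the partition into independence over the whole $\sigma$-field.
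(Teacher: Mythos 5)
Your proof is correct and rests on exactly the same ingredients as the paper's: Lemma \ref{lem2} to pass from the three pairwise independences to independence on $\mathcal{P}_{B,C}$, the identity $\sigma(B,C)=\sigma(\mathcal{P}_{B,C})$, and finite additivity over disjoint unions of partition members to lift independence from $\mathcal{P}_{B,C}$ to the whole $\sigma$-field. The only difference is that you run the cycle as (iii)$\Rightarrow$(ii)$\Rightarrow$(i)$\Rightarrow$(iii) while the paper runs it as (i)$\Rightarrow$(ii)$\Rightarrow$(iii)$\Rightarrow$(i); this is an inessential reorganization (and your version has the small virtue of invoking the finite-additivity computation only once and writing it out explicitly).
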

\begin{proof} Assume (i) holds. Since the elements of $\mathcal{P}_{B,C} = \left\{BC, B^{c}C, BC^{c},  B^{c}C^{c}\right\}$ are disjoint, $A$ is independent of all possible unions of elements in $\mathcal{P}_{B, C}.$ Note that $B = BC\cup BC^{c}, C = BC\cup B^{c}C$ and $B\cup C = BC\cup B^{c}C\cup BC^{c}.$ So, $A\ci B, ~ A\ci C$ and $A\ci (B\cup C).$ Hence, $(i) \Longrightarrow (ii).$
Assume now (ii) holds. Then by Lemma \ref{lem2}, $A\ci \mathcal{P}_{B,C}.$ Since $\sigma(\mathcal{P}_{B,C})$ consists 
the collection of all possible unions of sets in $\mathcal{P}_{B,C}$, we have $A\ci \sigma(\mathcal{P}_{B,C})=
\sigma(B,C)$, by  Theorem \ref{thm2.2}. So, $(ii) \Longrightarrow (iii).$ Consider now statements $(i)$ and $(iii).$ Since $\mathcal{P}_{B,C} \subset \sigma(B,C)$, $A\ci \sigma (B,C)$
implies $A \ci \mathcal{P}_{B, C}$ and thus $A\ci [B,C]$, by Definition \ref{def4.1}. Hence, $(iii) \Longrightarrow (i).$ Thus, statements $(i), (ii)$ and $(iii)$ are equivalent.
\end{proof}    
\vspace{0.4cm}

\vspace{0.4cm}
\end{document}